\let\csname equation*\endcsname\relax
\let\csname endequation*\endcsname\relax
\numberwithin{equation}{section}
\begin{document}

\newtheorem{thm}{Theorem}[section]
\newtheorem{cor}[thm]{Corollary}
\newtheorem{lem}[thm]{Lemma}
\newtheorem{lemma}[thm]{Lemma}
\theoremstyle{remark}
\newtheorem{rem}[thm]{Remark}
\newtheorem{remark}[thm]{Remark}
\theoremstyle{definition}
\newtheorem{defn}[thm]{Definition}
\newtheorem{example}[thm]{Example}
\newcommand{\ZZ}{\mathbb{Z}}
\newcommand{\QQ}{\mathbb{Q}}
\newcommand{\NN}{\mathbb{N}}
\newcommand{\RR}{\mathbb{R}}
\newcommand{\CC}{\mathbb{C}}
\newcommand{\TT}{\mathbb{T}}
\newcommand{\EE}{\mathbb{E}}
\newcommand{\PP}{\mathbb{P}}
\newcommand{\op}{\operatorname}
\providecommand{\abs}[1]{\lvert#1\rvert}
\providecommand{\norm}[1]{\lVert#1\rVert}

%\numberwithin{theorem}{section}
%\numberwithin{definition}{section}
\renewcommand{\theequation}{\thesection.\arabic{equation}}

\title[Lyapunov Exponents for Non-Gaussian Hamiltonian Systems]{Lyapunov Exponents for Hamiltonian Systems under Small L\'evy Perturbations}

\author{Ying Chao$^1$, Pingyuan Wei$^{2,*}$ and
Jinqiao Duan$^{3}$}

\address{$^1$ School of Mathematics and Statistics, Xi’an Jiaotong University, Xi’an 710049, China}
\address{$^2$School of Mathematics and Statistics \& Center for Mathematical Sciences, Huazhong University of Science and Technology, Wuhan 430074,  China}
\address{$^3$Department of Applied Mathematics, Illinois Institute of Technology, Chicago, IL 60616, USA}

\address{$^*$ Corresponding author: weipingyuan@hust.edu.cn}
%This work was partly supported by the NSF grant 1620449,  and NSFC grants 11531006 and 11771449.
\ead{yingchao1993@xjtu.edu.cn, weipingyuan@hust.edu.cn, duan@iit.edu.}

\begin{abstract}
This work is to investigate the (top) Lyapunov exponent for a class of Hamiltonian systems under small non-Gaussian  L\'evy noise. In a suitable moving frame, the linearisation of such a system can be regarded as a small perturbation of a nilpotent linear system. The  Lyapunov exponent is then  estimated by taking a Pinsky-Wihstutz transformation and applying the Khas'minskii formula, under appropriate assumptions on smoothness, ergodicity and integrability. Finally,  two examples are present to illustrate our results.
The result characterizes the growth or decay rates of a class of dynamical systems under the interaction between Hamiltonian structures and non-Gaussian uncertainties.

\textbf{Keywords}:  Lyapunov exponent; Hamiltonian systems; (non-Gaussian) L\'evy noise; Pinsky-Wihstutz transformation; stochastic stability along a trajectory.
\end{abstract}

%\begin{abstract}
%Hamiltonian systems under non-Gaussian  L\'evy noise have drawn increasing attention in many areas of science. It is well known that Lyapunov exponents play a key role in examining basic dynamical properties of nonlinear systems, for instance, the stability, chaos, and bifurcation.  In this paper, we deal with the evaluation of Lyapunov exponents for Hamiltonian systems under small non-Gaussian L\'evy noise. This is done by taking advantage of the nilpotent structure of the corresponding linearization system and introducing Pinsky-Wihstutz transformation. 
%%Motivated by Baxendale's work on
%\end{abstract}

\section{Introduction}
%In the study of the dynamical behaviour of complex systems, it is important to take randomness into account. 
The randomly influenced Hamiltonian systems are mathematical models for complex phenomena in physical, chemical and biological science \cite{Bi,Arnold1998,FW2012}. These systems are stochastic differential equations (SDEs) whose deterministic counterparts have Hamiltonian structures, and their solutions are referred as (Hamiltonian) diffusion processes. As random fluctuation (also called, random noise) is present, the original deterministic Hamiltonian structure is usually modified or destroyed, and some interesting phenomena may occur. A meaningful issue is to investigate the stochastic stability of these systems by evaluating the Lyapunov exponents of the corresponding linearized systems. % For references on the vast classical literature on this problem for deterministic or stochastic systems

For stochastic Hamiltonian systems driven by (Gaussian) Brownian motion, this stability problem has been studied extensively, for example, in Arnold \cite{Arnold1998}, Arnold-Oeljeklaus-Pardoux \cite{Arnold1986},  Ariaratnam-Xie \cite{Ariaratnam1990}, Baxendale-Goukasian \cite{Baxendale2001,Baxendale2002} and Zhu \cite{Zhu2004}. There are two common ways to linearize  these systems. One way is to consider the linearizations at equilibrium states and get the corresponding (top) Lyapunov exponents. Then the signs of Lyapunov exponents determine the (local) stabilities for the original systems. Within a certain region of an equilibrium state, consider two different trajectories obtained by starting the original SDE at distinct points. If the Lyapunov exponent is negative, it is clear that they will simultaneously converge to the equilibrium state. But, if the Lyapunov exponent is positive, it is not readily apparent what will happen to the distance apart of these two trajectories as time goes to infinity. This gives rise to the other way to linearize systems (i.e., along trajectories), and then examine the stochastic stability along trajectories. In this present paper, we will adopt the latter one. For more related results on Lyapunov exponents of SDEs with Brownian motion, we refer to \cite{Baxendale1999,Mao2006}.

However, the Gaussian paradigm is known to be too limited in many physical contexts. Non-Gaussian random fluctuations, including heavy-tailed distributions and burst-like events as in earthquakes or abrupt climate change, are widely observed in scientific and engineering systems \cite{Sato,Ap,Duan}. Recently, Lyapunov exponents of stochastic differential equations with (non-Gaussian) L\'evy noise have received increasing attention, as in, for example, Mao-Rodkina \cite{Mao1995}, Applebaum \cite{Ap} and Qiao-Duan \cite{QiaoDuan2016}. It is thus desirable to investigate Lyapunov exponents for Hamiltonian system evolutions under non-Gaussian L\'evy fluctuations. But very few works are available on this topic as far as we know.

%In this paper we are concerned about the (top) Lyapunov exponents for one degree-of-freedom (DoF) Hamiltonian systems with $\alpha$-sable L\'evy noise.
Our goal in this paper is to estimate the (top) Lyapunov exponents for one degree-of-freedom Hamiltonian systems under small L\'evy perturbations. Based on interlacing technique \cite{Ap}, the L\'evy perturbations consider here consist of a Brownian part and a bounded jump part. These systems should be interpreted as modified versions of Marcus SDEs \cite{Mar,KPP} and the solution processes are jump diffusion processes. One of our main inspirations comes from the work \cite{Baxendale2002} by Baxendale and Goukasian, where the authors studied a class of stochastic Duffing-van der Pol equations, and estimated the Lyapunov exponents for this Hamiltonian system under small Gaussian perturbation. We generalize and improve their approach to deal with the case of non-Gaussian L\'evy perturbations. This gives rise to several diﬀiculties both in analytic and probabilistic aspects. 

This paper is organized as follows. In Section 2, we recall some basic facts on L\'evy motions and Hamiltonian systems with L\'evy noise. Section 3 is dedicated to formulate precise assumptions and present our main results, Theorem \ref{theorem}. We first show that the linearized equations are randomly influenced nilpotent linear systems. Then we introduce a Pinsky-Wihstutz transformation and apply a Khas'minskii formula to estimate the Lyapunov exponent. Finally, we present two specific illustrative examples in Section 4.

\section{Preliminaries}

We first recall some basic facts on L\'evy motion \cite{Ap, Sato, Duan}, and introduce a class of stochastic Hamiltonian systems to be studied.

\subsection{L\'evy Motions} 

Let $( \Omega , \mathscr{F}, \{ \mathscr{F}_t \}_{t\geqslant0} , \PP)$ be a filtered probability space, where $\mathscr{F}_t$ is a nondecreasing family of sub-$\sigma$-fields of $\mathscr{F}$ satisfying the usual conditions. An $\mathscr{F}_t$-adapted stochastic process $L_t=L(t)$ taking values in $\mathbb{R}^d$ with $L(0)=0$ $a.s.$ (almost surely) is called a L\'{e}vy motion if it is stochastically continuous,   with independent increments and stationary increments.\par

An $d$-dimensional L\'{e}vy motion can be characterized by a drift vector $b\in \mathbb{R}^d$, an $d \times d$ non-negative-definite, symmetric matrix $Q$,  and a Borel measure $\nu$ defined on ${\mathbb{R}^d}\backslash \{ 0\}$. We call $(b,Q,\nu)$ the generating triplet of the L\'{e}vy motion $L_t$ and write this L\'{e}vy motion as $L_t\sim (b,Q,\nu)$ for convenient. Moreover, we have the following L\'{e}vy-It\^o decomposition for $L_t$:
\begin{equation}\label{decomposition}
{L_t}= bt + B_{Q}(t) + \int_{\|z\|< c} z \tilde N(t,dz) + \int_{\|z\|\ge c} z N(t,dz),
\end{equation}
where $N(dt,dz)$ is the Poisson random measure on $\mathbb{R}^{+}\times({\mathbb{R}^d}\backslash \{ 0\})$, $\tilde N(dt,dz) = N(dt,dz) - \nu (dz)dt$ is the compensated Poisson random measure, $\nu= \mathbb{E}N(1,\cdot)$ is the jump measure, $B_Q(t)$ is an independent $d$-dimensional Brownian motion with covariance matrix $Q$, and the last two terms describe the ‘small jumps’ and ‘big jumps’ of Lévy process, respectively. Here $\|\cdot\|$ denotes the Euclidean norm and $c$ is a positive constant. In the following, we denote $L_c(t)=\gamma t + B_{Q}(t)$ as the continuous part of $L_t$ and $L_d(t)=L_t-L_c(t)$ as the discontinuous part.
\par

\subsection{Hamiltonian Systems with L\'evy noise}

Let $H:\RR^2\to\RR$ be a smooth function with isolated critical points such that $H(x)\to\infty$ as $\|x\|\to\infty$, and $U_1=(\partial H/ \partial x^2,-\partial H/ \partial x^1)^T$ be the Hamiltonian vector field with respect to $H$. Given smooth vector fields $V_1,\cdots,V_d$ and a $d$-dimensional L\'evy motion $L_t=(L^{k}_t)_{k=1,\cdots, d}$ with generating triplet $(0, I,\nu_{})$. For $0<\varepsilon<1$, we consider the following one degree-of-freedom Hamiltonian system with L\'evy noise
\begin{equation}\label{Equation-1}
dx_t=U_1(x_t) dt+\varepsilon \sum_{k=1}^d V_k (x_t) \diamond dL^{k}_t,\;\;x(0)=x_0,
\end{equation}
%which should be interpreted as the following integral equation 
or equivalently,
\begin{equation}\label{Equation-1-1}
x_t=x_0+\int_0^t U_1(x_s)ds+\varepsilon \sum_{k=1}^d \int_0^t V_k (x_{s-}) \diamond dL^{k}_s,
\end{equation}
where ``$\diamond$" indicates Marcus (canonical) integral \cite{Mar,KPP} defined by
\begin{align}%\label{Marcus-1-1}
\varepsilon\sum_{k=1}^d\int_0^t V_k (x_{s-}) \diamond dL^{k}_s=&\varepsilon\sum_{k=1}^d\int_0^t V_k (x_{s-}) \circ dL^{k}_{c,s} +\varepsilon\sum_{k=1}^d\int_0^t V_k (x_{s-})dL^{k}_{d,s}  \notag\\
&+\sum_{0\leqslant s\leqslant t}\left[ \xi^\varepsilon(\Delta L_{s}(x_{s-})) - x_{s-}-\varepsilon\sum_{k=1}^dV_k (x_{s-})\Delta L^{k}_s
\right] \notag
\end{align}
with $\int\cdots\circ dL^k_{c,s}$ denoting the Stratonovtich integral, $\int \cdots dL^{k}_{d,s}$ denoting the It\^o integral and $\xi^\varepsilon$ being the value at $\tau= 1$ of the solution of the following ODE: (for each $x\in\RR^2$ and $z\in\RR^d$)
\begin{align}\label{Marcus-1-1}
\frac{d\xi^\varepsilon (\tau z)(x)}{d\tau}=\varepsilon\sum_{k=1}^dz_k V_k (\xi^\varepsilon (\tau z)(x)),\;\; \xi^\varepsilon(0)(x)=x.
\end{align}
Compared with It\^o stochastic differentials, Marcus one has the advantage of leading to ordinary chain rule under a transformation (change of variable). This offers some slight reduction in the lengths of some of the calculations, but has no serious effect on the theorem. We also remark that \eqref{Equation-1} is referred as a stochastic Hamiltonian system preserving symplectic structure if $V_k$, $k=1,\cdots d$ are Hamiltonian vector fields \cite{Pingyuan2019}. 

Rewritting SDE \eqref{Equation-1} into It\^o form, we have
\begin{align}\label{Equation-1-ito}
dx_t=& U_1(x_t) dt +\frac{1}{2}\varepsilon^2\sum_{k=1}^d (DV_kV_k)(x_t) dt+\varepsilon\sum_{k=1}^d V_k(x_t)dB_t^k\notag\\
&+\int_{\|z\|<c}\left[\xi^\varepsilon(z)(x_{t-})-x_{t-}-\varepsilon\sum_{k=1}^dz_k V_k(x_{t-}) \right]\nu(dz)dt\notag\\
&+\int_{\|z\|<c}\left[\xi^\varepsilon(z)(x_{t-})-x_{t-}\right]\tilde{N}(dt,dz)\notag\\
&+\int_{\|z\|\geqslant c}\left[\xi^\varepsilon(z)(x_{t-})-x_{t-}\right]{N}(dt,dz), 
\end{align}
where $c$ is a positive constant. The term in \eqref{Equation-1-ito} involving large jump is controlled by a function on ${\{\|z\|\geqslant c\}}$, and it would be absent if we take $c=\infty$. One standard way to handle this case is to use interlacing technique \cite{Ap}. In this paper, we use this technique directly without making precise statements. For more details about interlacing, we would like to refer to Applebaum \cite{Ap}. Hence, we can omit the large-jump terms and concentrate on the study of the equations driven by continuous noise interspersed with small jumps, then SDE \eqref{Equation-1-ito} can be modified as
\begin{align}\label{Equation-1-modified}
dx_t=& U_1(x_t) dt +\frac{1}{2}\varepsilon^2\sum_{k=1}^d (DV_kV_k)(x_t) dt+\varepsilon\sum_{k=1}^d V_k(x_t)dB_t^k\notag\\
&+\int_{\|z\|<c}\left[\xi^\varepsilon(z)(x_{t-})-x_{t-}-\varepsilon\sum_{k=1}^dz_k V_k(x_{t-}) \right]\nu(dz)dt\notag\\
&+\int_{\|z\|<c}\left[\xi^\varepsilon(z)(x_{t-})-x_{t-}\right]\tilde{N}(dt,dz).
\end{align}

If we assume that $U_1(x)$ and $\tilde{V}(x)\triangleq\sum_{k=1}^d (DV_kV_k)(x) $ are locally Lipschitz continuous functions and satisfy `one sided linear growth’ condition in the following sense:
\begin{enumerate}
\item[(A1)] (Locally Lipschitz condition) For any $R>0$, there exists $K_1 > 0$ such that, for all $\|x_1\|$, $\|x_2\|\leqslant R$, 
$$
\|U_1(x_1)-U_1(x_2)\|^2+\|\tilde{V}(x_1)-\tilde{V}(x_2)\|^2+\max_{1\leqslant k \leqslant d}\|V_k(x_1)-V_k(x_2)\|^2\leqslant K_1\|x_1-x_2\|^2,
$$
\item[(A2)] (One sided linear growth condition) There exists $K_2 > 0$ such that, for all $x\in\RR^2$, 
$$
\sum_{k=1}^d\|V_k(x)\|^2+2x\cdot (U_1+\tilde{V})(x)\leqslant K_2(1+\|x\|^2),
$$
\end{enumerate}
then there exists a unique solution to \eqref{Equation-1-modified} and the solution process is adapted and c\'adl\`ag, based on Lemma 6.10.3 of \cite{Ap} and Theorem 3.1 of \cite{Br}. Referring to Theorem 6.8.2 of \cite{Ap}, assumption (A2) also ensures that the solution process of \eqref{Equation-1-modified} has a Lyapunov exponent. %If we know that we have a solution to \eqref{Equation-1-modified} and this doesn't require us to impose  assumption (A1)-(A2) 

%If we further assume that
%\begin{enumerate}
%\item[(A3)] There exists $K_3 > 0$ such that, for all $x\in\RR^2$, 
%$$
%x\cdot (U_1+\tilde{V})(x)\leqslant K_3(1+\|x\|^2),
%$$
%\end{enumerate}
%then the solution process of \eqref{Equation-1-modified} has a Lyapunov exponent, 

\section{Lyapunov Exponent for A Hamiltonian System under Small L\'evy Perturbation}
From now on, we focus on the modified SDE \eqref{Equation-1-modified}, which is indeed a one degree-of-freedom Hamiltonian system with small L\'evy perturbation. For convenient, we rewrite SDE \eqref{Equation-1-modified} into canonical form:
\begin{equation}\label{Equation-1-modified-marcus}
dx_t=U_1(x_t) dt+\varepsilon \sum_{k=1}^d V_k (x_t) \diamond d\tilde{L}^{k}_t
\end{equation}
with $\tilde{L}^{k}_t$ a L\'evy-type noise neglecting large jump part. Linearizing the canonical SDE \eqref{Equation-1-modified-marcus} along the trajectory $x_t$, we get the following equation 
\begin{equation}\label{Equation-2}
dv_t=DU_1(x_t)v_t dt+\varepsilon\sum_{k=1}^d DV_k (x_t)v_t \diamond d\tilde{L}^{k}_t.
\end{equation}
Instead of calculate the Lyapunov exponent of such a linearization system directly, we would like to estimate it by taking advantage of the specific structure of equation \eqref{Equation-2}.

\subsection{Nilpotent Structure of the Linearization System}

We first claim that, when rewritten with respect to a suitable moving frame,  the linearization of  perturbed Hamiltonian system \eqref{Equation-1-modified-marcus}, that is, system \eqref{Equation-2}, is a perturbed nilpotent linear system. For convenience, we write 
$
U_2(x)=\nabla H(x)/\|\nabla H(x)\|^2,
$ 
and adopt the notation $U. f (x) = Df(x)(U(x)) = \langle \nabla f (x), U(x)\rangle$, which is the action of a vector field $U$ as a first order differential operator acting on a function $f$. 

Denote by $M$ the space $\RR^2$ with all critical points of $H$ removed. Note that, for $x\in M$, $(U_1.H)(x)=0$, $(U_2.H)(x)=1$ and $\langle U_1 (x), U_2(x)\rangle=0$. We rewrite smooth vector fields $V_k$, $k=1,\cdots, d$, into 
\begin{equation}\label{Vk}
{V}_k(x)=a_k^1(x)U_1(x)+a_k^2(x)U_2(x),\;\;x\in M,
\end{equation}
where $a_k^1$ and $a_k^2$ are smooth functions with respect to $x$. %\;\text{ for }\;k=0,1,\cdots,d.$$ 
Then the original system \eqref{Equation-1-modified-marcus} becomes 
\begin{equation}\label{Equation-3}
dx_t=U_1(x_t) dt+\varepsilon\sum_{k=1}^d (a_k^1U_1+a_k^2U_2)(x_t) \diamond d\tilde{L}^{k}_t.
\end{equation}
%By the chain rule, the process $H_t=H(x_t)$ satisfies
%\begin{equation}\label{Equation-4}
%dH_t=\varepsilon^\alpha a_0^2(x_t)dt+\varepsilon\sum_{k=1}^d a_k^2(x_t) \diamond dL^{\alpha,k}_t.
%\end{equation}
Furthermore, we have the following lemma:

\begin{lemma}  \label{lem-2}
Let $w_t=(w_{t}^1,w_{t}^2)$ represent the linearized process $v_t$ in a moving frame given by $U_1(x)$ and $U_2(x)$. That is,
\begin{equation}  \label{eq-3}
{v}_t=w_{t}^1U_1(x_t)+w_{t}^2U_2(x_t)%=\sum_{i=1}^2w_{t}^iU_i(x_t).
\end{equation}
Define exist time $\tau_\varepsilon=\inf_{t\geqslant 0}\{ x_t \notin M \text{ or  }  x_t=\pm\infty \}$ as the first time that the process $x_t$ given by \eqref{Equation-1-modified-marcus} hits a critical point of $H$ or explodes to infinity. For all $t<\tau_\varepsilon$,  we have
\begin{equation}\label{Equation-5}
dw_t=\Lambda(x_t)w_tdt+\varepsilon\sum_{k=1}^d M_k(x_t)w_t \diamond d\tilde{L}^{k}_t,
\end{equation}
where $\Lambda=\begin{bmatrix}\begin{matrix}0 & A \\ 0&  0\end{matrix}\end{bmatrix}$, $M_k=\begin{bmatrix}\begin{matrix} B_k & C_k \\ D_k &  E_k \end{matrix}\end{bmatrix}$ $(k=1,\cdots,d)$ with 
\begin{align}
&A(x)=\big\{\big[\big((\partial_2H)^2-(\partial_1H)^2\big)(\partial_{22}H-\partial_{11}H)+4\partial_1H\partial_2H\partial_{12}H\big]/\|\nabla H\|^2\big\} (x), \notag\\
 &B_k(x)=(U_{1}.a_k^1)(x)-A(x)a_k^2(x),\;\;\;\;\;\;\; D_k(x)=(U_{1}.a_k^2)(x), \notag\\
& C_k(x)= {(U_{2}.a_k^1)(x)+A(x)a_k^1(x)},\;\;\;\; \;\;\; E_k(x)=(U_{2}.a_k^2)(x). \notag
\end{align}
\end{lemma}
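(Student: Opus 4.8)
The plan is to exploit the one feature that makes the Marcus integral convenient here --- it obeys the ordinary chain rule --- so that the frame representation \eqref{eq-3} can be differentiated exactly as in the deterministic case, with no It\^o-type correction surfacing. Write $P(x)=[\,U_1(x)\mid U_2(x)\,]$ for the matrix whose columns are the frame fields; since $\langle U_1,U_2\rangle=0$ on $M$ and neither field vanishes there, $P(x)$ is invertible for $x\in M$, so on $\{t<\tau_\varepsilon\}$ the vector $w_t=P(x_t)^{-1}v_t$ is well defined and every vector identity may be checked by comparing the $U_1(x_t)$- and $U_2(x_t)$-coefficients. First I would differentiate \eqref{eq-3} by the Marcus chain rule, substituting \eqref{Equation-3} for $dx_t$:
\begin{align}
dv_t=&\,(dw_t^1)\,U_1(x_t)+(dw_t^2)\,U_2(x_t)\notag\\
&+\big(w_t^1\,DU_1(x_t)+w_t^2\,DU_2(x_t)\big)\Big(U_1(x_t)\,dt+\varepsilon\sum_{k=1}^d(a_k^1U_1+a_k^2U_2)(x_t)\diamond d\tilde L^k_t\Big).\notag
\end{align}
On the other side, $DV_k\eta=(\eta.a_k^1)U_1+(\eta.a_k^2)U_2+a_k^1\,DU_1\eta+a_k^2\,DU_2\eta$ makes the right-hand side of \eqref{Equation-2} explicit in $DU_1,DU_2$ and the $\nabla a_k^j$; equating the $dt$-parts and each $\diamond d\tilde L^k_t$-part of the two expressions for $dv_t$, and then comparing the $U_1$- and $U_2$-components, produces the scalar system for $w_t$.

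Carrying this out, matching $dt$-parts forces the drift of $dw_t$ --- read in the frame as $\mu^1U_1+\mu^2U_2$ --- to equal $-w_t^2\,[U_1,U_2](x_t)=w_t^2\big(DU_1\cdot U_2-DU_2\cdot U_1\big)(x_t)$, and matching the $k$-th noise coefficient forces the $k$-th diffusion vector of $dw_t$ to equal $(v_t.a_k^1)U_1+(v_t.a_k^2)U_2-(a_k^1w_t^2-a_k^2w_t^1)\,[U_1,U_2](x_t)$, with $v_t=w_t^1U_1+w_t^2U_2$ retained only to abbreviate the directional-derivative terms. So the lemma reduces to decomposing the single vector field $[U_1,U_2]=DU_2\cdot U_1-DU_1\cdot U_2$ in the frame $\{U_1,U_2\}$. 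Its $U_2$-component vanishes: the $U_2$-coefficient of any vector $\zeta$ equals $\langle\zeta,\nabla H\rangle$ (since $\langle U_1,U_2\rangle=0$ gives $\langle\zeta,U_2\rangle/\langle U_2,U_2\rangle=\langle\zeta,\nabla H\rangle$), while $[U_1,U_2].H=U_1.(U_2.H)-U_2.(U_1.H)=U_1.(1)-U_2.(0)=0$ by $(U_1.H)=0$ and $(U_2.H)=1$. This is exactly why $\Lambda$ has a vanishing bottom row and why $D_k,E_k$ carry no $A$-term --- i.e.\ why the frozen linear part is nilpotent. The surviving $U_1$-component is a routine (if lengthy) Hessian computation: writing $U_1=J\nabla H$ with $J=\begin{bmatrix}0&1\\-1&0\end{bmatrix}$ and $U_2=\nabla H/\|\nabla H\|^2$, so that $DU_1=J\,\mathrm{Hess}\,H$ and $DU_2=\|\nabla H\|^{-2}\mathrm{Hess}\,H-\|\nabla H\|^{-4}\,\nabla H\,(\nabla\|\nabla H\|^2)^{\top}$, one expands $DU_2\cdot U_1-DU_1\cdot U_2$, uses $\langle\nabla H,U_1\rangle=0$ to drop the rank-one piece, and reads off the $U_1$-coefficient of $-[U_1,U_2]$ as $A(x)$. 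Substituting back and collecting the $(U_i.a_k^j)$ terms then yields $\Lambda$ together with the matrices $M_k$, whose entries are $B_k=U_1.a_k^1-Aa_k^2$, $C_k=U_2.a_k^1+Aa_k^1$, $D_k=U_1.a_k^2$ and $E_k=U_2.a_k^2$.

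The difficulty here is organisational rather than conceptual: because the Marcus correction stays hidden behind the chain rule, the real labour is the explicit Hessian identity for $A$ and the careful tracking of the several $DU_i\cdot U_j$ and gradient contributions in the noise terms without sign errors. I would also flag at the outset that \eqref{Equation-5} holds only on $\{t<\tau_\varepsilon\}$: there $x_t\in M$, which keeps $P(x_t)$ invertible and all of $A,a_k^1,a_k^2$ and their first derivatives finite --- precisely the role of the exit time $\tau_\varepsilon$ in the statement.
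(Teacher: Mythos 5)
Your argument is correct and is essentially the paper's own proof: both routes produce two Marcus-chain-rule expressions for $dv_t$ (one from \eqref{Equation-2} after writing $V_k=a_k^1U_1+a_k^2U_2$, one from differentiating \eqref{eq-3} along \eqref{Equation-3}), equate them, and compare $U_1$- and $U_2$-coefficients, the only difference being that you prove the key identity $DU_1U_2-DU_2U_1=AU_1$ (vanishing $U_2$-component via $[U_1,U_2].H=0$, the $U_1$-component via the Hessian computation) where the paper simply cites Lemma 1 of Baxendale--Goukasian. One caveat on the final step: carrying out that Hessian computation gives the displayed numerator divided by $\|\nabla H\|^4$ rather than $\|\nabla H\|^2$ as printed in the lemma (check $H=\frac{a}{2}u_2^2$ from Example 4.1, where $DU_1U_2-DU_2U_1=\frac{1}{au_2^2}\,U_1$ while the printed formula gives $a$), so you should state that you recover the Baxendale--Goukasian coefficient with the $\|\nabla H\|^4$ denominator rather than claiming to ``read off'' the formula exactly as displayed.
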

\begin{proof}
The idea is to apply change of variable formula in the sense of canonical differential which could be referred to Proposition 4.3 of \cite{KPP}, and to follow the lines of \cite[Lemma 3]{Baxendale2002}. By substituting \eqref{Vk} in the equation \eqref{Equation-2}, we have
\begin{equation}%\label{Equation-2}
dv_t=DU_1(x_t)v_t dt+\varepsilon\sum_{k=1}^d \sum_{i=1}^2 \big(a_k^i DU_i+U_i[\nabla a_k^i]^T\big)(x_t) v_t \diamond d\tilde{L}^{k}_t. \notag
\end{equation}
And then using \eqref{eq-3}, we obtain
\begin{align}
dv_t\label{dv1}
%=&\sum_{j=1}^2w_{t}^jDU_1U_i(x_t)(x_t)dt +\varepsilon\sum_{k=1}^d  \sum_{i,j=1}^2  \big(a_k^i DU_i+U_i[\nabla a_k^i]^T\big)(x_t)w_{t}^jU_j(x_t) \diamond d\tilde{L}^{k}_t\notag\\
=&\sum_{j=1}^2w_{t}^jDU_1U_j(x_t)dt +\varepsilon\sum_{k=1}^d  \sum_{i,j=1}^2  w_{t}^j\big(a_k^i DU_iU_j +U_j.a_k^i U_i\big)(x_t)\diamond d\tilde{L}^{k}_t. %\notag\\
%=&w_{t}^1DU_1U_1(x_t)+w_{t}^2DU_1U_2(x_t)dt +\varepsilon\sum_{k=1}^d \big[w_{t}^1(a_k^1DU_1U_1+U_1.a_k^1 U_1+  a_k^2 DU_2 U_1+U_1.a_k^2U_2)(x_t)\notag\\
%&\;\;\;\;\;\;\;\;\;\;\;\;\;\;\;\;\;\;\;\;\;\;\;\;\;\;\;\;\;\;\;\;\;\;\;\;\;\;\;\;\;\;\;\;\;\;\;\;\;\;\;\;\;\;\;+w_{t}^2(a_k^2 DU_1 U_2+U_2.a_k^1 U_1+  a_k^2 DU_2 U_2+  U_2.a_k^2 U_2)(x_t) \big] \diamond d\tilde{L}^{k}_t\notag\\
\end{align}
On the other hand, by differentiating \eqref{eq-3} with respect to $t$ and taking \eqref{Equation-3} into account, we conclude that
\begin{align}\label{dv2}
d{v}_t%=dw_{t}^1U_1(x_t)+dw_{t}^2U_2(x_t)
=&\sum_{j=1}^2 dw_{t}^jU_j(x_t)+\sum_{j=1}^2w_t^jDU_j dx_t \notag\\
=&\sum_{j=1}^2 dw_{t}^jU_j(x_t)+\sum_{j=1}^2w_t^jDU_j \left(U_1(x_t) dt+\varepsilon\sum_{k=1}^d \sum_{i=1}^2 a_k^iU_i(x_t) \diamond d\tilde{L}^{k}_t\right) \notag\\
=&\sum_{j=1}^2 dw_{t}^jU_j(x_t)+\sum_{j=1}^2w_t^j(DU_j U_1)(x_t) dt+\varepsilon\sum_{k=1}^d  \sum_{i,j=1}^2 w_t^j(a_k^iDU_jU_i)(x_t) \diamond d\tilde{L}^{k}_t). %\notag\\
\end{align}
Note that $(DU_1U_2-DU_2U_1)(x)=(AU_1)(x)$, as shown in Lemma 1 of \cite{Baxendale2002}. We equate these two semimartingale expressions for $dv_t$, that is, \eqref{dv1}-\eqref{dv2}. The result follows by comparing their coefficient terms and directly calculation. 
\end{proof}

%The linearization of a one degree-of-freedom Hamiltonian system, when rewritten with respect to a suitable moving frame, is a nilpotent linear system. 
\subsection{A L\'evy-type Pinsky-Wihstutz Transformation}

Lemma \ref{lem-2} shows that we are indeed dealing with a small L\'evy-type perturbation of a nilpotent system. 
 Motivated by the remarkable work on small random perturbation of a nilpotent system in Pinsky-Wihstutz \cite{Pinsky1988} and its applications in \cite{Baxendale2001,Baxendale2002}, we define
\begin{equation}\label{Pinsky-Wihstutz transformation}
T=\begin{bmatrix}\begin{matrix}\varepsilon^{\beta} & 0 \\ 0&  1\end{matrix}\end{bmatrix},\;\;0<\beta<1,
\end{equation}
for fixed $\varepsilon>0$. 
Since $$\min(\varepsilon^{\beta},1)\|w\|\leqslant\|Tw\|\leqslant\max(\varepsilon^{\beta},1)\|w\|,$$ it follows that $$\lim_{t\to\infty}\frac{1}{t}\log\|Tw_t\|=\lim_{t\to\infty}\frac{1}{t}\log\|w_t\|$$ and so the process $Tw_t$ has the same Lyapunov exponent as $w_t$. For simplicity of notation we continue to write $w_t$ in place of $Tw_t$. 

\begin{remark}\label{PW-Rk}
For Brownian case, that is, $L_t\sim(0,I,0)$, the value of $\beta$ in transformation \eqref{Pinsky-Wihstutz transformation} is equal to $2/3$ \cite{Pinsky1988}. Such a transformation can be used to convert a problem involving singular perturbation into one involving a non-singular perturbation; see, for example, \cite{Baxendale2001,Baxendale2002}. Moreover, after the transformation, we have a pair process $\{ (x_t,w_t):t\geqslant 0 \}$ with $x_t$ moves at the fast rate 1 and $w_t$ moving at slow rate $\varepsilon^{\beta} $. It means that, assuming enough ergodicity and integrability, a stochastic averaging argument is possible.\\

%However, we have to point out that there is no standard value of $\beta$ for general L\'evy case so far. And there is also some flexibility regarding choice of $\beta$ as we can always get the corresponding estimate results with slight modifications if this parameter takes different values. As the L\'evy perturbation considered here consists of a Brownian part and a bounded jump part, it would make sense for us to choose $\beta=2/3$ in this paper. 

\end{remark}

Keeping in mind the equation for $x_t$ in \eqref{Equation-1-modified} or equivalently \eqref{Equation-1-modified-marcus}, the process $w_t$ $(t<\tau_\varepsilon)$ is now given by 

\begin{equation}\label{Equation-5T}
dw_t=\varepsilon^{\beta}\Lambda(x_t)w_tdt+\varepsilon\sum_{k=1}^d M_k^\varepsilon(x_t)w_t \diamond d\tilde{L}^{k}_t,
\end{equation}
where $M_k^\varepsilon=TM_kT^{-1}=\begin{bmatrix}\begin{matrix} B_k & \varepsilon^{\beta}C_k \\ \varepsilon^{-\beta}D_k &  E_k \end{matrix}\end{bmatrix}$ for $k=1,\cdots,d$. 

%Denote the Lyapunov exponent of process $w_t$ by $\lambda_\varepsilon=\lim_{t\to \infty}\frac{1}{t}\log\|w_t\|$.  
Note that canonical differential satisfies the change of variable formula \cite{KPP}, we write 
$$
w_t=\|w_t\|\begin{bmatrix}\begin{matrix}\cos \theta_t \\ \sin \theta_t \end{matrix}\end{bmatrix},
$$ 
and define $\rho_t=\log\|w_t\|$. Hence, equation \eqref{Equation-5T} can be rewritten as a pair of equations:
\begin{align}
d\theta_t=&-\varepsilon^\beta A(x_t)\sin^2\theta_t+\sum_{k=1}^d\sigma_k^1(x_t,\theta_t)\diamond d\tilde{L}^{k}_t,\label{theta-M}\\
d\rho_t=&\varepsilon^\beta A(x_t) \sin\theta_t\cos\theta_t+\sum_{k=1}^d\sigma_k^2(x_t,\theta_t)\diamond d\tilde{L}^{k}_t, \label{rho-M}
\end{align}
where, for $k=1,2,\cdots,d$,
\begin{align}
\sigma_k^1(x,\theta)=&\varepsilon^{1-\beta}D_k(x)\cos^2\theta-\varepsilon(B_k-E_k)(x)\sin\theta\cos\theta-\varepsilon^{1+\beta}C_k(x)\sin^2\theta \notag\\
\triangleq& \varepsilon^{1-\beta} Q_1^k(x,\theta)+\varepsilon^{} Q_2^k(x,\theta) +\varepsilon^{1+\beta} Q_3^k(x,\theta),\label{sigma1}\\
\sigma_k^2(x,\theta)=& \varepsilon^{1-\beta}D_k(x)\sin\theta\cos\theta +\varepsilon [B_k(x)\cos^2\theta+E_k(x)\sin^2\theta]+\varepsilon^{1+\beta}C_k(x)\sin\theta\cos\theta \notag\\
\triangleq&\varepsilon^{1-\beta} P_1^k(x,\theta)+\varepsilon P_2^k(x,\theta) +\varepsilon^{1+\beta} P_3^k(x,\theta). \label{sigma2}
\end{align}
Notice that, under the change of variables, the vector fields $\sigma_k^i$, $k=1,\cdots,d$, $i=1,2$, should be understood as functions of $x$, $\theta$ and $\rho$, but they indeed do not depend on $\rho$. The fact that the equation for $\log\|w_t\|$ does not involve $\|w_t\|$ was first used by Khas’minskii \cite{Khasminskii1967} in the 1960s, and is an essential part of the derivation of the Furstenberg-Khas’minskii formula for the top Lyapunov exponent of a linearized system.

We next further convert Marcus equations \eqref{theta-M}-\eqref{rho-M}  into the It\^o form \cite[Page 418]{Ap}. It is important to note that the Marcus interpretation for SDE \eqref{theta-M}-\eqref{rho-M} depends also on the SDE \eqref{Equation-3} for $x_t$. More specifically, the Marcus interpretation is only for SDE describing a diffusion process \cite{KPP}, and the pair process $\{(\theta_t,\rho_t) : t \geqslant 0\}$ is not a diffusion process because its distribution depends also on $\{x_t : t \geqslant 0\}$. A jump in the L\'evy process $L_t^\alpha$ causes jumps in both $x_t$ and $(\theta_t,\rho_t)$, we thus need to supplement ODE \eqref{Marcus-1-1} with the following equations:
\begin{align}\label{Marcus-1-2}
\frac{d\zeta_i^\varepsilon (\tau z)(x,\theta)}{d\tau}=\sum_{k=1}^dz_k  \sigma_k^i\big(\xi^\varepsilon(\tau z)(x),\zeta_1^\varepsilon(\tau z)(x,\theta)\big),\;\;i=1,2,
\end{align}
with  $\zeta^\varepsilon(0)(x,\theta)=(x,\theta)$. That is, we need to focus on the time one flow $\eta^\varepsilon(z)(x,\theta,\rho)=(\xi^\varepsilon,\zeta_1^\varepsilon,\rho+\zeta_2^\varepsilon)$ started at $(x,\theta,\rho)$ along the vector field $\sum_{k=1}^dz_k \tilde{V}_k(x,\theta,\rho)$, where
\begin{align}\label{Marcus-1-2-v}
 \tilde{V}_k(x,\theta,\rho)=\Big(\varepsilon V_k(x),\sigma_k^1(x,\theta),\sigma_k^2(x,\theta)\Big).
\end{align}
Therefore, we have the following It\^o SDE:
\begin{align}
d\theta_t=&-\varepsilon^\beta A(x_t)\sin^2\theta_tdt +\frac{1}{2}\sum_{k=1}^d \tilde{\sigma}_k^1(x_t,\theta_t) dt+\sum_{k=1}^d \sigma_k^1(x_t,\theta_t)dB_t^k\notag\\
&+\int_{\|z\|<c}\left[\zeta_1^\varepsilon(z)(x_{t-},\theta_{t-})-\theta_{t-}-\sum_{k=1}^dz_k \sigma_k^1(x_{t-},\theta_{t-}) \right]\nu(dz)dt\notag\\
&+\int_{\|z\|<c}\left[\zeta_1^\varepsilon(z)(x_{t-},\theta_{t-})-\theta_{t-}\right]\tilde{N}(dt,dz),  \label{ito-theta}\\
d\rho_t=&\varepsilon^\beta A(x_t) \sin\theta_t\cos\theta_tdt+\frac{1}{2}\sum_{k=1}^d \tilde{\sigma}_k^2(x_t,\theta_t) dt+\sum_{k=1}^d \sigma_k^2(x_t,\theta_t)dB_t^k\notag\\
&+\int_{\|z\|<c}\left[\zeta_2^\varepsilon(z)(x_{t-},\theta_{t-})-\sum_{k=1}^dz_k\sigma_k^2(x_{t-},\theta_{t-}) \right]\nu(dz)dt \notag\\
&+\int_{\|z\|<c}\left[\zeta_2^\varepsilon(z)(x_{t-},\theta_{t-})\right]\tilde{N}(dt,dz) \label{ito-rho}
\end{align}
with Wong-Zakai correction terms
 \begin{align}
\tilde{\sigma}_k^1(x,\theta)=&\varepsilon D_x \sigma_k^1 V_k+D_\theta \sigma_k^1 \sigma_k^1 \notag\\
=&\varepsilon^{2-2\beta}  D_\theta Q_1^k Q_1^k+\varepsilon^{2-\beta} (D_x Q_1^kV_k+D_\theta Q_1^kP_2^k+D_\theta Q_2^kQ_1^k) \notag\\
&+\varepsilon^{2} (D_xQ_2^kV_k+D_\theta Q_1^kQ_3^k+D_\theta Q_3^kQ_1^k) \notag\\
&+\varepsilon^{2+\beta} (D_xQ_3^kV_k +D_\theta Q_2^k Q_3^k+ D_\theta Q_3^kQ_2^k) +\varepsilon^{2+2\beta} D_\theta Q_3^k Q_3^k \notag\\
\triangleq&\varepsilon^{2-2\beta} \tilde{Q}_1^k(x,\theta)+\varepsilon^{2-\beta} \tilde{Q}_2^k(x,\theta) +\varepsilon^{2} \tilde{Q}_3^k(x,\theta)+\varepsilon^{2+\beta} \tilde{Q}_4^k(x,\theta)+\varepsilon^{2+2\beta} \tilde{Q}_5^k(x,\theta), \notag\\
% \end{align}
%  \begin{align}
  \tilde{\sigma}_k^2(x,\theta)=& \varepsilon D_x \sigma_k^2 V_k+D_\theta \sigma_k^2 \sigma_k^1 \notag\\
%=&\Big(\frac{\partial \sigma_k^2 }{\partial x}\cdot \varepsilon V_k+ \frac{\partial \sigma_k^2 }{\partial \theta} \cdot  \sigma_k^2 \Big)\notag\\
%=&\varepsilon^{2-\beta} \partial_x P_1^kV_k+\varepsilon^{2} \partial_xP_2^k V_k+\varepsilon^{2+\beta} \partial_xP_3^kV_k \notag\\
%&+(\varepsilon^{1-\beta} \partial_\theta P_1^k+\varepsilon^{} \partial_\theta P_2^k +\varepsilon^{1+\beta} \partial_\theta P_3^k)(\varepsilon^{1-\beta} P_1^k+\varepsilon^{} P_2^k +\varepsilon^{1+\beta} P_3^k) \notag\\
%=&\varepsilon^{2-\beta} \partial_x P_1^kV_k+\varepsilon^{2} \partial_xP_2^kV_k +\varepsilon^{2+\beta} \partial_xP_3^kV_k \notag\\
%&+\varepsilon^{2-2\beta} P_1^k \partial_\theta P_1^k+\varepsilon^{2-\beta} (P_1^k\partial_\theta P_2^k+P_2^k\partial_\theta P_1^k) +\varepsilon^{2} (P_1^k\partial_\theta P_3^k+P_3^k\partial_\theta P_1^k) \notag\\
%&+\varepsilon^{2+\beta} (P_5^k \partial_\theta P_6^k+P_6^k\partial_\theta P_5^k) +\varepsilon^{2+2\beta} P_6^k\partial_\theta P_6^k \notag\\
=&\varepsilon^{2-2\beta} D_\theta P_1^kQ_1^k+\varepsilon^{2-\beta} (D_x P_1^kV_k+D_\theta P_1^kQ_2^k+D_\theta P_2^kQ_1^k) \notag\\
&+\varepsilon^{2} (D_xP_2^kV_k +D_\theta P_1^kQ_3^k+D_\theta P_3^kQ_1^k) \notag\\
&+\varepsilon^{2+\beta} (D_xP_5^kV_k +D_\theta P_2^kQ_3^k+D_\theta P_3^kQ_2^k ) +\varepsilon^{2+2\beta} D_\theta P_3^kQ_3^k \notag\\
\triangleq&\varepsilon^{2-2\beta} \tilde{P}_1^k(x,\theta)+\varepsilon^{2-\beta} \tilde{P}_2^k(x,\theta) +\varepsilon^{2} \tilde{P}_3^k(x,\theta)+\varepsilon^{2+\beta} \tilde{P}_4^k(x,\theta)+\varepsilon^{2+2\beta} \tilde{P}_5^k(x,\theta). \notag
\end{align}

\subsection{Estimating the Lyapunov Exponent}

Based on Pinsky-Wihstutz transformation \eqref{Pinsky-Wihstutz transformation}, we now proceed to estimate the Lyapunov exponent for the linearized version of the perturbed Hamiltonian system \eqref{Equation-1-modified-marcus}. We first note that, for the process $\{(x_t, \theta_t): t \geqslant 0\}$ given by \eqref{Equation-1-modified} and \eqref{ito-theta}, its generator $\mathcal{L}_\varepsilon$ can be written as
\begin{align}\label{operator}
(\mathcal{L}_\varepsilon f)(x,\theta)=&\bigg[U_1(x)+\frac{1}{2}\varepsilon^2\sum_{k=1}^d (DV_kV_k)(x)\bigg]\cdot\nabla_x  f(x,\theta)\notag\\
&+\bigg[-\varepsilon^\beta A(x)\sin^2\theta+\frac{1}{2}\sum_{k=1}^d \tilde{\sigma}_k^1(x,\theta)\bigg]\frac{\partial}{\partial \theta}f(x,\theta)\notag\\
&+\frac{1}{2}\sum_{k=1}^d \bigg(\varepsilon^2\text{Tr}[V_k^TV_k \nabla_x\nabla_x]+2\varepsilon V_k\sigma_k^1(x,\theta) \cdot \nabla_x\frac{\partial}{\partial\theta}+({\sigma}_k^1(x,\theta))^2\frac{\partial^2 }{\partial^2 \theta}\bigg)
 f(x,\theta)\notag\\
&+\int_{\|z\|<1}\Bigg\{f\big(\xi^\varepsilon(z)(x),\zeta_1^\varepsilon(z)(x,\theta)\big)-f(x,\theta)\notag\\
&\;\;\;-\sum_{k=1}^dz_k \Big[\varepsilon V_k(x)\cdot \nabla_x f(x,\theta)+\sigma_k^1(x,\theta)\frac{\partial}{\partial \theta}f(x,\theta)\Big]\Bigg\}\nu(dz),
\end{align}
for each $f\in C_b^2(M\times S^1)$ (i.e., $f$ is a $C^2$ and bounded function). Referring to \cite{Al,Br} and keeping assumptions (A1)-(A2) in mind, we assume throughout the paper the following:

\begin{enumerate}
\item[(A3)] For each sufficiently small $\varepsilon>0$, the process $\{(x_t, \theta_t): t \geqslant 0\}$ given by \eqref{Equation-1-modified} and \eqref{ito-theta} is a positive recurrent diffusion process on $M\times S^1$ with a (unique) stationary probability $\mu^\varepsilon$. We write $\mu_M^{\epsilon}$ for the $M$ marginal of $\mu^\varepsilon$.
\end{enumerate}

From the equation \eqref{ito-rho} for $\rho_t$, we observe that the leading drift term (with respect to $\varepsilon$) of this equation, formally,  not only depends on the value of $\beta$, but also may contain contributions from the $\nu$-integral term. More specifically, it may come from the following terms: $\varepsilon^\beta A(x) \sin\theta\cos\theta$, $\frac{\varepsilon^{2-2\beta}}{2} \sum_{k=1}^d\tilde{P}_1^k(x,\theta)$
and $I_\rho^\varepsilon \triangleq \int_{\|z\|<c}\left[\zeta_2^\varepsilon(z)(x,\theta)-\sum_{k=1}^dz_k\sigma_k^2(x,\theta) \right]\nu(dz)$.

As mentioned in Remark \ref{PW-Rk}, for pure Brownian case, it has been verified that $\beta=2/3$, so the leading drift term in pure Brownian case should be 
\begin{align}\label{leading-Brownian}
&\varepsilon^{2/3} \Big(A(x) \sin\theta\cos\theta+1/2\sum_{k=1}^d\tilde{P}_1^k(x,\theta)\Big)\notag\\
&=\varepsilon^{2/3} \Big(A(x) \sin\theta\cos\theta+ \sum_{k=1}^dD_k^2(x)(\frac{1}{2}\cos^2\theta-\sin^2\theta \cos^2\theta) \Big).
\end{align}
We next try to analyze the leading term of $I_\rho^{\epsilon}$ formally. By equations \eqref{Marcus-1-1} and \eqref{Marcus-1-2}, and following calculations in the proof for Lemma 6.10.3 of \cite{Ap}, we conclude that 
\begin{align} %\label{theta-Marcus-1}
I_\rho^\varepsilon(x,\theta)=&\int_{\|z\|<c}\left[\int_0^1\sum_{k=1}^d z_k\sigma_k^2\big(\xi^\varepsilon(\tau z)(x),\zeta_1^\varepsilon(\tau z)(x,\theta)\big)d\tau -\sum_{k=1}^d z_k \sigma_k^2(x,\theta)\right]\nu(dz) \notag\\
=&\int_{\|z\|<c}\int_0^1\int_0^a \sum_{k,l=1}^dz_kz_l \big(\varepsilon D_x \sigma_k^2  V_l+ D_\theta\sigma_k^2   \sigma_l^1 \big)
\big(\xi^\varepsilon(bz)(x),\zeta_1^\varepsilon(bz)(x,\theta)\big)dbda\;\nu(dz). \notag
\end{align}
Note that the expression for $\big(\varepsilon D_x \sigma_k^2  V_l+ D_\theta\sigma_k^2   \sigma_l^1 \big)(x,\theta)$ is quite similar to that of $\tilde{\sigma}_k^2(x,\theta)$. We adopt the following notation
\begin{align} \label{leading-Levy}
I_\rho^\varepsilon(x,\theta)\triangleq\int_{\|z\|<c} \left[  \varepsilon^{2-2\beta} R_0^\varepsilon+ \varepsilon^{2-\beta} R_1^\varepsilon+\varepsilon^{2} R_2^\varepsilon+\varepsilon^{2+\beta} R_3^\varepsilon+\varepsilon^{2+2\beta} R_4^\varepsilon\right] (z)(x,\theta) \nu(dz),
\end{align}
where, with $Int[f]\triangleq\int_0^1\int_0^a \sum_{k,l=1}^dz_kz_lf(\xi^\varepsilon(bz),\zeta_1^\varepsilon(bz))dbda $,
\begin{align} 
R_0^\varepsilon(z)&=Int[D_\theta P_1^kQ_1^l],\;\;\; \notag\\
R_1^\varepsilon(z)&=Int[D_x P_1^kV_l+D_\theta P_1^kQ_2^l+D_\theta P_2^kQ_1^l] \notag\\
R_2^\varepsilon(z)&=Int[D_xP_2^kV_l +D_\theta P_1^kQ_3^l+D_\theta P_3^kQ_1^l],\;\;\; \notag\\
R_3^\varepsilon(z)&=Int[D_xP_5^kV_l +D_\theta P_2^kQ_3^l+D_\theta P_3^kQ_2^l]\notag\\
R_4^\varepsilon(z)&=Int[D_\theta P_3^kQ_3^l]. \notag
\end{align}
In particular, 
\begin{align} 
R_0^\varepsilon(z)= \int_0^1\int_0^a\left(\sum_{k=1}^dz_k D_k\big(\xi^\varepsilon(bz)\big)\right)^2\left[ \frac{1}{2}\cos^2 \big(\zeta_1^\varepsilon(bz)\big)-\sin^2 \big(\zeta_1^\varepsilon(bz)\big) \cos^2 \big(\zeta_1^\varepsilon(bz)\big) \right] dbda. 
\end{align}
Since $R_i^\varepsilon$, $i=0,1,\cdots,4$, are functions depending on the value of $\varepsilon$, the integral with respect to jump measure, $I_\rho$, cannot necessarily be expanded a finite linear combination of terms of size $\varepsilon^\gamma$ for various $\gamma$.

We thus need the following assumptions on integrability and growth estimates.
\begin{enumerate}
\item[(A4)] For each sufficiently small $\varepsilon>0$, the functions $\log\|\nabla H\|$ is integrable with respect to $\mu_M^\varepsilon$; The functions 
$$
A(x) \sin\theta\cos\theta,\;\;\sum_{k=1}^d\tilde{P}_i^k(x,\theta), i=1,2,3,4,\;\;\sum_{k=1}^d{P}_j^k(x,\theta), j=1,2,3 \text{ and }  I_\rho^\varepsilon(x,\theta) 
$$
are all integrable with respect to $\mu^\varepsilon$; There is a constant $C<\infty$ such that $$\left|\int \tilde{P}_i^k(x,\theta) \mu^\varepsilon(dx,d\theta)\right|\leqslant C, \text{ for } i=2,3,4.$$
\item[(A5)] For each sufficiently small $\varepsilon>0$, the functions $R_i^\varepsilon(z)$, $i=0,1,\cdots,4,$ are all integrable with respect to $\nu$, and there is a constant
$C^{\prime}<\infty$ such that 

$$\left|\int\int_{\|z\|<c} R_i^\varepsilon(z)(x,\theta) \nu(dz) \mu^\varepsilon(dx,d\theta)\right|\leqslant C^{\prime}, \text{ for } i=0,1,\cdots,4.$$ 

\end{enumerate}

Under assumption (A4)-(A5), by \eqref{leading-Brownian} and \eqref{leading-Levy}, it makes sense for us to consider $\beta=\frac{2}{3}$ here. We note that this choice of $\beta$ is predicated on the fact that the L\'evy perturbation in \eqref{Equation-1-modified} consists of a Brownian part and a bounded jump part. To authors' knowledge, there is no standard value of $\beta$ in transformation \eqref{Pinsky-Wihstutz transformation} for the general L\'evy case so far. It is remarkable that there is some flexibility regarding choice of $\beta$ as we can always get the corresponding estimate results with slight modifications if this parameter takes different values.

We now have the following theorem on estimating the Lyapunov exponent.

\begin{thm}\label{theorem}
Let $\beta={2}/{3}$. If assumptions (A1-A5) hold, then the Lyapunov exponent for the linearized version \eqref{Equation-2} of the Hamiltonian system with L\'evy perturbation \eqref{Equation-1-modified} satisfies
\begin{align}\label{lem3}
\lambda_\varepsilon=\varepsilon^{\frac{2}{3}} \int_{M\times S^1}\Sigma_0(x,\theta)\mu^\varepsilon(dx,d\theta)+\mathcal{O}(\varepsilon^{\frac{4}{3}})
\end{align} 
as $\varepsilon\to 0$, where
\begin{align} 
\Sigma_0(x,\theta)=& A(x) \sin\theta\cos\theta+\sum_{k=1}^d D_k^2(x)(\frac{1}{2}\cos^2\theta-\sin^2\theta \cos^2\theta)+\int_{\|z\|<c} R_0^\varepsilon(z)(x,\theta) \nu(dz).
\end{align}

\end{thm}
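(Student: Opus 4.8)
The plan is to reduce $\lambda_\varepsilon$ to a Furstenberg--Khas'minskii integral of the drift of $\rho_t=\log\|w_t\|$ against the stationary law $\mu^\varepsilon$, and then to expand that drift in powers of $\varepsilon$ with $\beta=2/3$ frozen, reading off the $\varepsilon^{2/3}$-coefficient as $\Sigma_0$ and absorbing everything else into $\mathcal O(\varepsilon^{4/3})$. First I would record that under (A3) the pair $(x_t,\theta_t)$ never leaves $M\times S^1$, so the exit time $\tau_\varepsilon$ of Lemma~\ref{lem-2} is a.s.\ infinite and the reductions \eqref{Equation-5T}, \eqref{theta-M}--\eqref{rho-M}, \eqref{ito-theta}--\eqref{ito-rho} hold for all $t$. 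Since $\|Tw_t\|$ and $\|w_t\|$ share an exponential rate and, by the $\mu_M^\varepsilon$-integrability of $\log\|\nabla H\|$ in (A4) together with a Borel--Cantelli estimate for the positive recurrent $x_t$, one has $t^{-1}\log\|\nabla H(x_t)\|\to0$ a.s., the Lyapunov exponent of \eqref{Equation-2} equals $\lambda_\varepsilon=\lim_{t\to\infty}t^{-1}\rho_t$, a limit which exists a.s.\ by (A2) and Theorem~6.8.2 of \cite{Ap}.

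\textbf{Step 1: the ergodic (Khas'minskii) identity.} From \eqref{ito-rho}, whose coefficients are functions of $(x_s,\theta_s)$ only, I would write $\rho_t=\rho_0+\int_0^t g_\varepsilon(x_s,\theta_s)\,ds+M_t^\varepsilon$ with
\[
g_\varepsilon(x,\theta)=\varepsilon^{2/3}A(x)\sin\theta\cos\theta+\tfrac12\sum_{k=1}^d\tilde{\sigma}_k^2(x,\theta)+I_\rho^\varepsilon(x,\theta),
\]
and $M_t^\varepsilon$ the (local) martingale formed from the $dB^k$ and $\tilde N$ terms. Dividing by $t$ and letting $t\to\infty$: $t^{-1}\rho_0\to0$; Birkhoff's ergodic theorem for the positive recurrent process $(x_t,\theta_t)$ applied to $g_\varepsilon$ (which is $\mu^\varepsilon$-integrable precisely because of the integrability clauses of (A4)--(A5)) yields $t^{-1}\int_0^t g_\varepsilon\,ds\to\int_{M\times S^1}g_\varepsilon\,d\mu^\varepsilon$ a.s.; and $t^{-1}M_t^\varepsilon\to0$ a.s.\ by the strong law for local martingales, once the predictable bracket of $M^\varepsilon$ (a sum of $\int_0^t(\sigma_k^2)^2\,ds$ and of the $\tilde N$-angle bracket) is shown to grow at most linearly, for which I would localize with exhausting stopping times of $M\times S^1$ and again invoke the $\mu^\varepsilon$-averages of (A4)--(A5). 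This gives $\lambda_\varepsilon=\int_{M\times S^1}g_\varepsilon(x,\theta)\,\mu^\varepsilon(dx,d\theta)$.

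\textbf{Step 2: expansion and remainder.} Putting $\beta=2/3$ turns the exponents $2-2\beta,\,2-\beta,\,2,\,2+\beta,\,2+2\beta$ appearing in the expansion of $\tilde{\sigma}_k^2$ and in \eqref{leading-Levy} into $2/3,\,4/3,\,2,\,8/3,\,10/3$, so that
\[
g_\varepsilon(x,\theta)=\varepsilon^{2/3}\Sigma_0(x,\theta)+\varepsilon^{4/3}\Sigma_1^\varepsilon(x,\theta)+\varepsilon^{2}\Sigma_2^\varepsilon(x,\theta)+\varepsilon^{8/3}\Sigma_3^\varepsilon(x,\theta)+\varepsilon^{10/3}\Sigma_4^\varepsilon(x,\theta),
\]
where, using $\tfrac12\tilde{P}_1^k(x,\theta)=D_k^2(x)\big(\tfrac12\cos^2\theta-\sin^2\theta\cos^2\theta\big)$, the $\varepsilon^{2/3}$-coefficient is exactly the $\Sigma_0$ of the statement, while $\Sigma_j^\varepsilon=\tfrac12\sum_k\tilde{P}_{j+1}^k+\int_{\|z\|<c}R_j^\varepsilon(z)\,\nu(dz)$ for $j\geqslant1$. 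Integrating against $\mu^\varepsilon$ and using the uniform bounds on $\int\tilde{P}_i^k\,d\mu^\varepsilon$ ($i\geqslant2$) in (A4) and on $\int\int_{\|z\|<c}R_i^\varepsilon\,\nu(dz)\,\mu^\varepsilon$ ($i\geqslant1$) in (A5) shows that each $\int\Sigma_j^\varepsilon\,d\mu^\varepsilon$, $j\geqslant1$, is bounded uniformly in $\varepsilon$; since for small $\varepsilon$ the factor $\varepsilon^{4/3}$ dominates $\varepsilon^{2},\varepsilon^{8/3},\varepsilon^{10/3}$, this yields $\lambda_\varepsilon=\varepsilon^{2/3}\int_{M\times S^1}\Sigma_0(x,\theta)\,\mu^\varepsilon(dx,d\theta)+\mathcal O(\varepsilon^{4/3})$, which is \eqref{lem3}.

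\textbf{Expected main obstacle.} The hard part will be Step 1, specifically making the Khas'minskii argument rigorous on the noncompact state space $M\times S^1$: one cannot bound the coefficients of \eqref{ito-rho} pointwise, so establishing both the Birkhoff convergence of the drift average and $t^{-1}M_t^\varepsilon\to0$ requires carefully marrying the positive recurrence of (A3) with the integrability and uniform-bound hypotheses (A4)--(A5), through a localization up to $\tau_\varepsilon$ that is afterwards removed because $\tau_\varepsilon=\infty$. A subtler bookkeeping nuisance is that $I_\rho^\varepsilon$ does not split into finitely many clean powers of $\varepsilon$, since the $R_i^\varepsilon$ still carry an $\varepsilon$-dependence through the Marcus flows $\xi^\varepsilon,\zeta^\varepsilon$; rather than expand further, the plan is to keep the $R_i^\varepsilon$ intact and lean on the \emph{uniform-in-$\varepsilon$} estimates of (A5) to push the $j\geqslant1$ contributions into the error term.
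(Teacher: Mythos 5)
Your proposal is correct and follows essentially the same route as the paper: reduce $v_t$ to $w_t$ via the moving frame (the paper invokes temperedness of $\|\nabla H\|$ where you use the (A4) integrability of $\log\|\nabla H\|$), obtain the Khas'minskii formula $\lambda_\varepsilon=\int_{M\times S^1}g_\varepsilon\,d\mu^\varepsilon$ from \eqref{ito-rho} by the ergodic theorem with the Brownian and compensated-Poisson martingales vanishing at rate $o(t)$, and then expand with $\beta=2/3$, using the uniform bounds in (A4)--(A5) to absorb the $j\geqslant 1$ terms (keeping the $\varepsilon$-dependent $R_i^\varepsilon$ intact) into $\mathcal{O}(\varepsilon^{4/3})$. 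Your write-up is in fact more explicit than the paper's proof about the localization and martingale strong-law details, which the paper settles by citation.
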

\begin{proof}
We first claim that, for each $\varepsilon$, the processes $v_t$ and $w_t$ have the same Lyapunov exponent. %This is quite similar to the Brownian case \cite[Proposition 1]{Baxendale2002}.
Indeed, moving frame \eqref{eq-3} yields the inequity
$$
\min( \|\nabla H\|,\|\nabla H\|^{-1})\|w_t\|\leqslant\|v_t\|\leqslant\max( \|\nabla H\|,\|\nabla H\|^{-1})\|w_t\|.
$$
Noting that $\|\nabla H\|$ is tempered \cite[Section 4.1]{Arnold1998}, we have $\lim_{t\to\infty}\frac{1}{t}[\log\|v_t\|-\log\|w_t\|]=0$.

By equation \eqref{ito-rho}, the Lyapunov exponent of \eqref{Equation-2} is
\begin{align}
\lambda_\varepsilon=\lim_{t\to\infty}\frac{1}{t}\log\|w_t\|=\lim_{t\to\infty}\frac{1}{t}\rho_t=\lim_{t\to\infty}\frac{1}{t}\left( \log\|w_0\|+\int_0^t{P}_\varepsilon(x_s,\theta_s)ds+\mathcal{M}_t^1+\mathcal{M}_t^2\right)
\end{align}
with $${P}_\varepsilon(x,\theta)=\varepsilon^{\frac23}A(x) \sin\theta\cos\theta+\tilde{\sigma}_k^2(x,\theta) +I_\rho^\varepsilon(x,\theta),$$
$$
\mathcal{M}_t^1=\int_0^t\sum_{k=1}^d \sigma_k^2(x_s,\theta_s)dB_s^k\;\text{ and }\;\mathcal{M}_t^2=\int_0^t\int_{\|z\|<c}\left[\zeta_2^\varepsilon(z)(x_{s-},\theta_{s-})\right]\tilde{N}(ds,dz).
$$
Referring to \cite[Page 72]{Duan} and \cite[Page 109]{Ap}, both $\mathcal{M}_t^1$ and $\mathcal{M}_t^2$ are martingales for which $\mathcal{M}_t^i/t\to 0$, $i=1,2$, as $t\to\infty$. The ergodic theorem and assumption (A3) yields the Khas'minskii formula
\begin{align}
\lambda_\varepsilon= \int_{M\times S^1}P_\varepsilon(x,\theta)d\PP_\varepsilon(x,\theta).
\end{align}
The result now follows easily by using assumptions (A4) and (A5).
\end{proof}

%\begin{remark}%\label{}
%The choice of $\beta=2/3$ here is predicated on the fact that the L\'evy perturbation in \eqref{Equation-1-modified} consists of a Brownian part and a bounded jump part. In fact, to authors' knowledge, there is no standard value of $\beta$ for general L\'evy case so far. It is remarkable that there is some flexibility regarding choice of $\beta$ as we can always get the corresponding estimate results with slight modifications if this parameter takes different values. %As the L\'evy perturbation considered here consists of a Brownian part and a bounded jump part, it would make sense for us to choose $\beta=2/3$ in this paper. 
%\end{remark}

\begin{remark}%\label{}
Theorem \ref{theorem} shows that, to estimate the Lyapunov exponent for the original linearization system \eqref{Equation-2}, we could simplify the problem by analyzing the leading drift and diffusion terms of the transformed system \eqref{ito-theta}-\eqref{ito-rho} in the sense of a Pinsky-Wihstutz transformation \eqref{Pinsky-Wihstutz transformation}. This result is essentially based on the fact that the linearization of a one degree of freedom Hamiltonian system, when written with respect to a suitable moving frame, is a nilpotent linear system.
\end{remark}

\begin{remark}\label{remark3.5}
The result in Theorem \ref{theorem} depends closely on the type of perturbations which is indeed a combination of Brownian part and a  bounded jump part here. If there is no jump term, then the result in agreement with that given in \cite{Baxendale2002}. If the Brownian term is vanished and the assumption (A5) is failed, then the result may need to be modified with respect to other value of $\beta$. We also note that the assumption (A5) is almost harsh, even though it could be satisfied in some simple case (see Example \ref{ex-1}). Without this assumption, Theorem \ref{theorem} still makes sense if we replace the third term of $\Sigma_0$ by $\varepsilon^{-\frac{2}{3}}I_\rho^\varepsilon$ with $I_\rho^\varepsilon$ given in \eqref{leading-Levy}.
\end{remark}

\begin{remark}%\label{}
Here we only focus on the theoretical expression of  the Lyapunov exponent, and the main novelty of our work is the model itself. We have to point out that it is still a challenging but important task to develop efficient methods for estimating the integral in \eqref{lem3}, which is left for our future research. Actually, for Brownian case, there is a remarkable method to deal with this problem by combining averaging arguments and an adjoint expansion method in non-compact spaces; see \cite{Baxendale2002} for details. However, note that the stochastic averaging theories for SDE with multiplicative L\'evy noise is still remain to solve and the corresponding generator is indeed a non-local operator, this method is unfit to the L\'evy case directly. For some interesting works on evaluating numerical values of integrals with respect to invariant measures, we refer to \cite{Ariaratnam1990,Albeverio2014,Zhangyj2020AMM} and the references therein.
\end{remark}

\section{Examples}

In this section, we present two simple illustrative examples of the theory developed in the previous section.

%%------------------------------------------------------------------------------------------------------------------------------------------

\begin{example}\label{ex-1} {\bf (A stochastic nilpotent linear system)}
We first recall that a symmetric $\alpha$-stable L\'evy motion  $L_t^{\alpha}$ is a special L\'evy motion, with non-Gaussianity index (or  stability index) $\alpha\in(0,2)$. It has the generating triplet $(0,0,\nu_{\alpha})$ with jump measure $\nu_{\alpha}=C_\alpha\frac{du}{|u|^{d+\alpha}}$ \cite{Duan}.

Consider the following linear SDE in $\RR^2 \backslash \{0 \} $: 
\begin{align}\label{model}
du_t=\begin{bmatrix}\begin{matrix}0 & a \\ 0&  0\end{matrix}\end{bmatrix}u_tdt+\varepsilon\begin{bmatrix}\begin{matrix}0 & 0 \\ \sigma &  0\end{matrix}\end{bmatrix}u_t\circ dB_t+\varepsilon\begin{bmatrix}\begin{matrix}0 & 0 \\ \sigma &  0\end{matrix}\end{bmatrix}u_t\diamond dL^{\alpha}_t,
\end{align}
which can be regarded as a perturbed Hamiltonian system with $H(u_1,u_2)=\frac{1}{2} a u_2^2$. Here $a>0$, $\sigma>0$ and $L^{\alpha}_t$ $(1<\alpha<2)$ is a one-dimensional standard $\alpha$-stable L\'evy motion. Note that, for this system, the time one flow is $\xi(z)(u)=(u_1,z\sigma u_1+u_2)$ and the jump term 
$\sum_{0\leqslant s\leqslant t}\left[ \xi(\Delta L^{\alpha}_s)(u_{s-}) - u_{s-}-(0,u_{s-}^1\Delta L^{\alpha}_s)\right]=0$, so that the coefficients of both Marcus and It\^o versions are identical.

By introducing the following change of variables : $(\theta \in (0,2\pi )\backslash \{ \frac{\pi}{2},\frac{3\pi}{2}\})$
$$
\cos\theta=u_1/\|u_t\|,\;\;\;\;\; \sin\theta=u_2/\|u\|, \;\;\;\;\; \rho=\log\|u\|,
$$ 
we have 
%\begin{align}
%d\theta_t=&-a\sin^2\theta_tdt+\sigma\cos^2\theta_t\diamond dL^{\alpha}_t ,\label{theta}\\
%d\rho_t=&a\sin\theta_t\cos\theta_tdt+\sigma\sin\theta_t\cos\theta_t\diamond dL^{\alpha}_t , \label{rho}
%\end{align}
\begin{align}\label{model-theta-rho}
d\begin{bmatrix}\begin{matrix}\theta_t  \\ \rho_t\end{matrix}\end{bmatrix}=a\begin{bmatrix}\begin{matrix}-\sin^2\theta_t \\ \sin\theta_t\cos\theta_t\end{matrix}\end{bmatrix}dt
+\varepsilon\sigma\begin{bmatrix}\begin{matrix}\cos^2\theta_t \\ \sin\theta_t\cos\theta_t \end{matrix}\end{bmatrix}\circ dB_t
+\varepsilon\sigma\begin{bmatrix}\begin{matrix}\cos^2\theta_t \\ \sin\theta_t\cos\theta_t \end{matrix}\end{bmatrix}\diamond dL^{\alpha}_t,
\end{align}
%Converting equation \eqref{model-theta-rho} to It\^o form, we get 
or equivalently,
\begin{align}
d\theta_t=&-a\sin^2\theta_tdt-\varepsilon^2\sigma^2\sin\theta_t \cos^3\theta_tdt + \varepsilon\sigma\cos^2\theta_t dB_t\notag\\
&+\int_{|z|<c}[\zeta_1(z)(\theta_{t-})-\theta_{t-}]\nu_\alpha(dz)dt \notag\\
&+\int_{|z|<c}[\zeta_1(z)(\theta_{t-})-\theta_{t-}]\tilde{N}(dt,dz)+\int_{|z|\geqslant c}[\zeta_1(z)(\theta_{t-})-\theta_{t-}]N(dt,dz),\label{Ex-theta}\\
d\rho_t=&a\sin\theta_t\cos\theta_tdt +\varepsilon^2\sigma^2\Big(\frac{1}{2}\cos^2\theta_t-\sin^2\theta_t\cos^2\theta_t\Big)dt + \varepsilon\sigma\sin\theta_t\cos\theta_t dB_t\notag\\
&+\int_{|z|<c}[\zeta_2(z)(\theta_{t-})]\nu_\alpha(dz)dt \notag\\
&+\int_{|z|<c}[\zeta_2(z)(\theta_{t-})]\tilde{N}(dt,dz)+\int_{|z|\geqslant c}[\zeta_2(z)(\theta_{t-})]N(dt,dz),  \label{Ex-rho}
\end{align}
with 
\begin{align}
\zeta_1(z)(\theta)&=\arctan(\tan \theta +\varepsilon\sigma z),\\
\zeta_2(z)(\theta)&=\frac{1}{2}\log\left(\frac{1+(\tan\theta+\varepsilon\sigma z)^2}{1+\tan^2\theta}\right).%=\frac{1}{2}\log(1+(\tan\theta+\varepsilon\sigma z)^2)+\log \cos\theta.
\end{align}

By interlacing technique, we consider the modified version of equations \eqref{Ex-theta}-\eqref{Ex-rho}. Provided the modified process $\rho(t)$ is ergodic, the Lyapunov exponent can be obtained as
\begin{align}\label{Ex-lamdda}
\lambda=\lim_{t\to\infty}\frac{1}{t}\log\|u_t\|=\lim_{t\to\infty}\frac{1}{t}\rho_t=\mathbb{E}[Q(\theta)]=\int_0^{2\pi}Q(\theta)\mu(\theta)d\theta,
\end{align}
where
\begin{align}\label{Ex-Q}
Q(\theta)=&a\sin\theta\cos\theta+\varepsilon^2\sigma^2\Big(\frac{1}{2}\cos^2\theta_t-\sin^2\theta_t\cos^2\theta_t\Big) +\int_{|z|<c}[\zeta_2(z)(\theta)]\nu_\alpha(dz),
\end{align}
and $\mu(\theta)$ is the density of the invariant measure of the modified process $\theta_t$ with respect to the uniform measure on the unit circle. Remark that the operator for modified process $\theta_t$ satisfies
\begin{align}
\mathcal{A}_\theta f(\theta)=&-(a\sin^2\theta\frac{\partial}{\partial\theta})f(\theta)+\varepsilon^2\sigma^2\Big(-\sin\theta \cos^3\theta\frac{\partial}{\partial\theta}+\frac{1}{2} \cos^4\theta \frac{\partial^2}{\partial\theta^2}\Big)f(\theta) \notag\\
&+\int_{|z|<c}\Big[ f(\zeta_1(z)(\theta))-f(\theta)\Big]\nu_\alpha(dz).
\end{align}
Referring to Sun et. al. \cite{SunDuan2012}, $\mu(\theta)$ is indeed the solution of the following stationary Fokker-Planck equation 
\begin{align}
\mathcal{A}_\theta^\ast \mu(\theta)=0,
\end{align}
that is,
\begin{align}\label{equation-Ex-FP}
&\cos^2\theta \frac{\partial}{\partial\theta}(\sin^2\theta \mu(\theta))+\frac{1}{2}\varepsilon^2\sigma^2\cos^2\theta\frac{\partial}{\partial\theta}\left(\cos^2\theta\frac{\partial}{\partial\theta}(\cos^2\theta \mu(\theta))\right) \notag\\
&~~~~~~~~~~~~~~+\int_{\mathbb{R}\backslash \{ 0 \}} \left[\cos^2(\zeta_1(z)(\theta))\mu(\zeta_1(z)(\theta))-\cos^2\theta \mu(\theta) \right]\nu_\alpha(dz)=0.
\end{align}
%However, it is still difficult to solve \eqref{equation-Ex-FP} and evaluate integral \eqref{Ex-lamdda}. 
Notice that, by Taylor's Theorem, we have
\begin{align}
\int_{|z|<c}[\zeta_2(z)(\theta)]\nu_\alpha(dz)=&\int_{|z|<c}[g(\varepsilon\sigma z)(\theta)-g(0)(\theta)]\frac{C_\alpha}{|z|^{1+\alpha}}dz \notag\\
%=&\int_{|z|<c}\left[\frac{1}{2}g^{\prime\prime}(0)(\theta)\varepsilon^2\sigma^2z^2+ \frac{1}{4!}g^{(4)}(0)(\theta)\varepsilon^4\sigma^4z^4+\cdots\right]\frac{C_\alpha}{|z|^{1+\alpha}}dz \notag\\
=&\int_{|z|<c}\left[\frac{1}{2}g^{\prime\prime}(0)(\theta)\varepsilon^2\sigma^2z^2+ \frac{1}{4!}\int_{0}^1g^{(4)}(\tau \varepsilon\sigma z)(\theta)\varepsilon^4\sigma^4z^4 d\tau \right]\frac{C_\alpha}{|z|^{1+\alpha}}dz, \notag
\end{align}
where $g(\cdot)(\theta)=\frac{1}{2}\log(1+(\tan\theta+\cdot)^2)$, $g^{\prime}(0)=\frac{\tan\theta}{1+\tan^2\theta}=\sin\theta\cos\theta$, $\frac{1}{2}g^{\prime\prime}(0)=\frac{1}{2}\frac{1-\tan^2\theta}{(1+\tan^2\theta)^2}=\frac{1}{2}\cos^2\theta-\sin^2\theta\cos^2\theta$ and $g^{(4)}(\sigma z)=\frac{-2(\tan\theta+\sigma z)^4+(\tan\theta+ \sigma z)^2+6(\tan\theta+\sigma z)-1}{[1+(\tan\theta+\sigma z)^2]^4}$. It is not hard to show that, for all $\theta\in(-\frac{\pi}{2},\frac{\pi}{2})\bigcup (\frac{\pi}{2},\frac{3\pi}{2})$ and $\varepsilon\in(0,1)$, %the term
%$$
% \frac{1}{4!}\int_{|z|<c}\int_{\tau=0}^1g^{(4)}(\delta \varepsilon\sigma z)(\theta)\varepsilon^4z^4 d\tau\frac{C_\alpha}{|z|^{1+\alpha}}dz
%$$
% is vanishing as $\varepsilon \to 0$.
there is a positive constant $C_1$ such that
\begin{align}
 \frac{1}{4!}\int_{|z|<c}\left[g^{(4)}(\delta \varepsilon\sigma z)(\theta)\sigma^4z^4 \right]\frac{C_\alpha}{|z|^{1+\alpha}}dz<C_1. \notag
\end{align}
Analogously, we have
\begin{align}
&\int_{|z|<c}\Big[ f(\zeta_1(z)(\theta))-f(\theta)\Big]\nu_\alpha(dz)=\int_{|z|<c}\Big[ f(\zeta_1(z)(\theta))-f(\zeta_1(0)(\theta))\Big]\frac{C_\alpha}{|z|^{1+\alpha}}dz \notag\\
%&~~~~~~~~~~~~~~~~~~~~~~~~~~~=\int_{|z|<c}\left[\frac{1}{2}F^{\prime\prime}(0)(\theta)\varepsilon^2\sigma^2z^2+ \frac{1}{4!}F^{(4)}(0)(\theta)\varepsilon^4\sigma^4z^4+\cdots\right]\frac{C_\alpha}{|z|^{1+\alpha}}dz \notag\\
&~~~~~~~~~~~~~~~~~~~~=\int_{|z|<c}\left[\frac{1}{2}F^{\prime\prime}(0)(\theta)\varepsilon^2\sigma^2z^2+ \frac{1}{4!}\int_{0}^1F^{(4)}(\tau \varepsilon\sigma z)(\theta)\varepsilon^4\sigma^4z^4 d\tau\right]\frac{C_\alpha}{|z|^{1+\alpha}}dz,\notag
\end{align}
where $F(\cdot)(\theta)=f\circ \arctan(\tan\theta+\cdot)$ and $\frac{1}{2}F^{\prime\prime}(0)=-\frac{\tan\theta}{(1+\tan^2\theta)^2}f^{\prime}(\theta)+\frac{1}{2}\frac{1}{(1+\tan^2\theta)^2}f^{\prime\prime}(\theta)=\big(-\cos^3\theta\sin\theta \frac{\partial}{\partial\theta}+\frac{1}{2} \cos^4\theta \frac{\partial^2}{\partial\theta^2}\big)f(\theta)$.%and \textcolor{red}{$F^{(4)}(z)=?$}.

By taking Pinsky-Wihstutz transformation \eqref{Pinsky-Wihstutz transformation}, $\tilde{u}_t=Tu_t$, with $\beta=\frac{2}{3}$, and introducing the corresponding change of variables with respect to $(\tilde{\theta}_t,\tilde{\rho}_t)$, we conclude that 
\begin{align}\label{Ex-lamdda-2}
\lambda=\lim_{t\to\infty}\frac{1}{t}\log\|u_t\|=\lim_{t\to\infty}\frac{1}{t}\log\|Tu_t\|=\lim_{t\to\infty}\frac{1}{t}\tilde{\rho}_t.
\end{align}
According to Theorem \ref{theorem}, we further have
\begin{align}\label{Ex-lamdda-3}
\lambda=\varepsilon^{\frac{2}{3}} \int_{ S^1}\tilde{Q}(\tilde{\theta})\tilde{\mu}(\tilde{\theta})d\tilde{\theta}+\mathcal{O}(\varepsilon^{\frac{4}{3}}),
\end{align}
where
\begin{align}\label{Ex-Q-3}
\tilde{Q}(\theta)=&a\sin\theta\cos\theta+\sigma^2\Big(\frac{1}{2}\cos^2\theta-\sin^2\theta\cos^2\theta\Big) \Big(1+\int_{|z|<c}z^2\nu_\alpha(dz)\Big)\notag\\
=&a\sin\theta\cos\theta+\sigma^2\Big(1+\frac{2C_\alpha}{2-\alpha} c^{2-\alpha}\Big)\Big(\frac{1}{2}\cos^2\theta-\sin^2\theta\cos^2\theta\Big),
\end{align}
and $\tilde{\mu}(\theta)$ is the corresponding density of the invariant measure of the process $\tilde{\theta}_t$. Moreover, define
\begin{align}
\tilde{\mathcal{A}}^1_\theta=-(a\sin^2\theta\frac{\partial}{\partial\theta})+\sigma^2\Big(1+\frac{2C_\alpha}{2-\alpha} c^{2-\alpha}\Big)\Big(-\sin\theta \cos^3\theta\frac{\partial}{\partial\theta}+\frac{1}{2} \cos^4\theta \frac{\partial^2}{\partial\theta^2}\Big) .
\end{align}
Then the generator for the process $\tilde{\theta}_t$ is 
\begin{align}
\tilde{\mathcal{A}}_\theta
=&\varepsilon^{\frac{2}{3}}\tilde{\mathcal{A}}^1_\theta+\varepsilon^{\frac43} \cdot\frac{1}{4!}\sigma^4\int_{|z|<c}\int_{0}^1F^{(4)}(\tau \varepsilon\sigma z)(\theta)z^4 d\tau\frac{C_\alpha}{|z|^{1+\alpha}}dz .%\notag\\
%=&\varepsilon^{\frac{2}{3}}\tilde{\mathcal{A}}^1_\theta+\sum_{n=2}^\infty  \varepsilon^{\frac{2n}3} \cdot\frac{1}{(2n)!}\sigma^{2n}\int_{|z|<c}z^{2n} \frac{C_\alpha}{|z|^{1+\alpha}}dz\cdot F^{({2n})}(0)(\theta)+\cdots \notag\\
%=&\varepsilon^{\frac{2}{3}}\tilde{\mathcal{A}}^1_\theta+\sum_{n=2}^\infty  \varepsilon^{\frac{2n}3} \cdot\frac{2C_\alpha c^{2n-\alpha}\sigma^{2n}}{(2n)! (2n-\alpha)}\cdot F^{({2n})}(0)(\theta)+\cdots\notag
\end{align}
%We consider $\sigma=1$ and $\theta\in(-\frac{\pi}{2},\frac{\pi}{2})\bigcup (\frac{\pi}{2},\frac{3\pi}{2})$, and omit the large-jump terms by interlacing. Then, numerically, we have 
%$$\mu(\theta)=
%\left\{
%\begin{array}{rl}
%&?,\;\; -\frac{\pi}{2}<\theta<\frac{\pi}{2}, \\ %\label{lem-1-1}\\
%&?,\;\;\;\; \frac{\pi}{2}<\theta<\frac{3\pi}{2},  \\%\label{lem-1-2}
%\end{array}
%\right.\;\text{ and }\;\lambda=?$$
%\begin{remark}%\label{}
\end{example}

%%------------------------------------------------------------------------------------------------------------------------------------------
\begin{example}{\bf (A stochastic Duffing equation)} This is a L\'evy version of (last) example of \cite{Baxendale2002}. We will show that the viewpoint of introducing a suitable moving frame to converting the original linearization system to a nilpotent one, and our estimation method method based on transformation \eqref{Pinsky-Wihstutz transformation} is applicable in the present example. 

Consider a stochastic L\'evy version of the Duffing equation in $\RR^2 \backslash \{0 \} $:
\begin{align}\label{Example}
\ddot{x}=-x-x^3+\varepsilon\sigma x \diamond dL_t
\end{align} 
with $\sigma>0$ and $L^{\alpha}_t \sim (0,I,\nu)$ a one-dimensional L\'evy motion. Rewriting \eqref{Example} in phase space $(x,y)=(x,\dot{x})\in\RR^2$, we have
$$
U_1(x,y)=\begin{bmatrix}\begin{matrix} y \\ -x-x^3 \end{matrix}\end{bmatrix},\;\;\; U_2(x,y)=\frac{1}{(x+x^3)^2+y^2}\begin{bmatrix}\begin{matrix} x+x^3 \\ y \end{matrix}\end{bmatrix}.
$$
This system \eqref{Example} is indeed a perturbation of the Hamiltonian system with $H(x,y)=\frac{1}{2}x^2+\frac{1}{4}x^4+\frac{1}{2}y^2$ by the following vector fields
$$
V_1(x,y)=\sigma\begin{bmatrix}\begin{matrix}0  \\ x\end{matrix}\end{bmatrix}.
$$

Linearizing system \eqref{Example} along the trajectory $(x_t,y_t)$ and introducing a moving frame in the form of \eqref{Vk} with $a_1^1(x,y)=-\frac{\varepsilon\sigma x(x+x^3)}{(x+x^3)^2+y^2}$ and $a_1^2(x,y)=\sigma xy$, we obtain 
\begin{align}%\label{model}
dw_t=\begin{bmatrix}\begin{matrix}0 & A \\ 0&  0\end{matrix}\end{bmatrix}w_tdt+\varepsilon\begin{bmatrix}\begin{matrix} B_1 & C_1 \\ D_1 & E_1 \end{matrix}\end{bmatrix}w_t\diamond dL_t,
\end{align}
where $A=\frac{3x^2((x+x^3)^2-y^2)}{[(x+x^3)^2+y^2]^2}$, $B_1=-E_1=\frac{-\sigma xy(x^2+2)}{(x+x^3)^2+y^2}$, $C_1=\frac{-\sigma x^3(x+x^3)}{[(x+x^3)^2+y^2]^2}$, $D_1=-\sigma x(x+x^3)+\sigma y^2$.% and $E_1=\frac{\sigma xy(x^2+2)}{(x+x^3)^2+y^2}$.

Let $\beta=\frac{2}{3}$, we further introduce Pinsky-Wihstutz transform \eqref{Pinsky-Wihstutz transformation} and rewrite the transformed system with respect to $w=\|w\|[\cos \theta, \sin \theta ]^T$ and $\rho=\log\|w\|$:
\begin{align}
d\theta_t=&-\varepsilon^{\frac{2}{3}} A(x_t,y_t)\sin^2\theta_t+\sigma_1^1(x_t,y_t,\theta_t)\diamond dL^{k}_t,\label{theta-M-e}\\
d\rho_t=&\varepsilon^{\frac{2}{3}} A(x_t,y_t) \sin\theta_t\cos\theta_t+\sigma_1^2(x_t,y_t,\theta_t)\diamond dL^{k}_t, \label{rho-M-e}
\end{align}
where 
\begin{align}
\sigma_1^1(u,\theta)=&\varepsilon^{\frac{1}{3}}D_1(x,y)\cos^2\theta- \varepsilon B_1(x,y)\sin 2\theta-\varepsilon^{\frac{5}{3}}C_1(x,y)\sin^2\theta, \notag\\
\sigma_1^2(u,\theta)=& \varepsilon^{\frac{1}{3}}D_1(x,y)\sin\theta\cos\theta +\varepsilon B_1(x,y)\cos 2\theta+\varepsilon^{\frac{5}{3}}C_1(x,y)\sin\theta\cos\theta. \notag
\end{align}

We next focus on the modified linearization system of  \eqref{Example} in the sense of interlacing. By Theorem  \ref{theorem} and Remark \ref{remark3.5}, the Lyapunov exponent of the modified linearization system is
\begin{align}%\label{lem3}
\lambda=\varepsilon^{\frac{2}{3}} \int_{\RR^2 \backslash \{ 0\} \times S^1}\Sigma_0(x,y,\theta)\mu^\varepsilon(dx,dy,d\theta)+\mathcal{O}(\varepsilon^{\frac{4}{3}}),
\end{align} 
where
\begin{align}\label{EX-2-sigma}
\Sigma_0(x,y,\theta)=&A(x,y) \sin\theta\cos\theta+ D_1^2(x,y)(\frac{1}{2}\cos^2\theta-\sin^2\theta \cos^2\theta) \notag\\
&+\varepsilon^{-\frac{2}{3}}\int_{\|z\|<c}\left[\zeta_2(z)(x,\theta)-z\sigma_1^2(x,\theta) \right]\nu(dz)
\end{align}
with $\zeta_2$ the time one flow of the vector field $\sigma_1^2$. According to the analysis of \eqref{leading-Levy}, the leading order of the third term in \eqref{EX-2-sigma} should not less than $\varepsilon^{0}=1$. But unfortunately, it is not easy to obtain the  precise expression of $\zeta_2$, even though we know that $\xi(\tau z)(u)=(x,\varepsilon\sigma \tau zx+y)^T$. Compared with the Brownian case in \cite{Baxendale2002}, an extra term (the third one in \eqref{EX-2-sigma}) appears in the integrand of integral expression for Lyapunov exponent. If the modified system of \eqref{Example} is only driven by Brownian noise, that is, $\nu=0$, then the result here would be consistent with that in \cite{Baxendale2002}.

\end{example}
%%------------------------------------------------------------------------------------------------------------------------------------------

\section*{Data Availability}
The data that support the findings of this study are available within the article.

%--------------------------------------------------------------------------------------------------------------------------

\section*{Acknowledgements}
The authors would like to thank Dr Qiao Huang, Dr Li Lin, Dr Zibo Wang, and Dr Yanjie Zhang for helpful discussions. This work was partly supported by NSFC grants 11771449 and 11531006.

\section*{References}
\bibliographystyle{plain}
\bibliography{ref-lyapunov}

\end{document}